\DeclareMathOperator{\diam}{diam}
\newtheorem{theorem}{Theorem}
\newtheorem{proposition}[theorem]{Proposition}
\theoremstyle{definition}
\newtheorem{definition}[theorem]{Definition}
\newtheorem{example}[theorem]{Example}
\newtheorem{question}[theorem]{Question}
\newtheorem*{acknowledgement}{Acknowledgements}
\theoremstyle{remark}
\newtheorem{property}{Property}[theorem]
\newtheorem{case}{Case}
\newtheorem{claim}{Subcase}[case]
\begin{document}

\title{On local fixed or periodic point properties}
\author{Alejandro Illanes}
\address{Instituto de Matem\'{a}ticas, Universidad Nacional Aut\'{o}noma
de M\'{e}xico, Circuito Exterior, Ciudad Universitaria, M\'{e}xico, 04510,
D.F., M\'{e}xico}
\email{illanes@matem.unam.mx}
\author{Pawe\l\ Krupski}
\address{Institute of Mathematics, University of Wroc\l aw, pl.
Grunwaldzki 2/4, 50--384 Wroc\l aw, Poland}
\email{Pawel.Krupski@math.uni.wroc.pl}
\date{\today}
\subjclass[2010]{Primary 37B45; Secondary 54F15}
\keywords{local fixed point property, local periodic point property, locally connected continuum}

\begin{abstract}
A space $X$ has the local fixed point property $LFPP$, (local
periodic point property $LPPP$) if it has an open basis $\mathcal{B}$
such that, for each $B\in \mathcal{B}$, the closure
$\overline{B}$ has the fixed (periodic) point property.
Weaker versions $wLFPP$, $wLPPP$  are also considered
and examples of metric continua that distinguish all these properties are constructed. We show that for planar or one-dimensional
locally connected metric continua the properties are equivalent.
\end{abstract}

\maketitle

A topological space $X$ has the \textit{fixed point property} ($X\in FPP$)
if each continuous map $f:X\rightarrow X$ has a fixed point; $X$ has the
\textit{periodic point property }($X\in PPP$) if each continuous map $%
f:X\rightarrow X$ has a periodic point.

We can localize these properties in two ways as follows.

\begin{definition} A topological space $X$ has the \textit{local
fixed point property }($X\in LFPP$) if $X$ has an open basis $\mathcal{B}$
(a basis for $LFPP$) such that  $\overline{B}\in FPP$ for each $B\in \mathcal{B}$.

$X$ has the \textit{weak local
fixed point property }($X\in wLFPP$) if $X$ has an open basis $\mathcal{B}$
(a basis for $wLFPP$) such that, for each $B\in \mathcal{B}$ and each
continuous map $f:X\rightarrow X$, whenever $f(\overline{B})\subset B$, then $f$
has a fixed point in $B$~\cite{KOU}.

$X$ has the \textit{local periodic point property }($X\in LPPP$) if $X$ has
an open basis $\mathcal{B}$ (a basis for $LPPP$) such that  $\overline{B}\in PPP$ for each $B\in \mathcal{B}$.

$X$ has the \textit{weak local
periodic point property }($X\in wLPPP$) if $X$ has an open basis $\mathcal{B}$
(a basis for $wLPPP$) such that, for each $B\in \mathcal{B}$ and each
continuous map $f:X\rightarrow X$, whenever $f(\overline{B})\subset B$, then $f$ has a periodic point in $B$~\cite{KOU}.
\end{definition}

 Obviously, $LFPP \Rightarrow wLFPP$ and $LPPP \Rightarrow wLPPP$. Examples~\ref{ex2} and~\ref{ex3} show that the converse implications do not hold.

\bigskip

The main motivation for studying  local fixed or periodic point properties is that they play an essential role in the dynamics of continuous
maps: if a perfect ANR-compactum $X$ has the $wLPPP$, then the set of periodic points of a generic map $f:X\rightarrow X$ has no isolated points and is dense in the set of  chain recurrent
points of $f$~\cite{KOU}.

All spaces that are locally AR's (i.~e., they have a basis $\mathcal B$ such that $\overline{B}\in AR$ for each $B\in\mathcal B$) have the $LFPP$. In particular, all    polyhedra and all Euclidean manifolds have the $LFPP$. A
harmonic fan has the $LPPP$ but it does not have the $wLFPP$~\cite{KOU}.
It is natural to ask for what continua properties $LFPP$, $LPPP$ and their weak versions differ. In this note we
show that for all planar or one-dimensional locally connected metric
continua the  properties are equivalent.

The proof for planar continua is based on two classic theorems.

\begin{proposition}\cite[Theorem (13.1), p. 132]{Bo}\label{P1}
If a locally connected continuum $X$ in the plane does not separate the plane, then $X$ is an absolute retract. In particular, $X\in FPP$.
\end{proposition}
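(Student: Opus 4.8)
The plan is to recover Borsuk's theorem in four moves: show that $X$ is an ANR, then that it is contractible, conclude that it is therefore an AR, and finally deduce the FPP. Two classical facts do the heavy lifting --- the characterisation of planar ANR's and Whitehead's theorem --- and a single plane-topology observation supplies the rest.

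\emph{$X$ is an ANR.} A locally connected plane continuum is an ANR precisely when its complement in $\mathbb{R}^{2}$ has only finitely many components (this is the other classical theorem of plane topology underlying the argument); since $\mathbb{R}^{2}\setminus X$ is connected by hypothesis, $X$ is an ANR. In particular $X$ is locally contractible and has the homotopy type of a CW complex, so \v{C}ech and singular (co)homology of $X$ coincide.

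\emph{$X$ is contractible, hence an AR, hence in $FPP$.} The engine is the observation that \emph{every Jordan curve $J\subseteq X$ bounds a closed $2$-cell $\overline{D}\subseteq X$}. Indeed, if the bounded complementary domain $D$ of $J$ contained a point $q\notin X$, then $q$ would lie in the connected --- hence arcwise connected --- open set $\mathbb{R}^{2}\setminus X$, and any arc in $\mathbb{R}^{2}\setminus X$ from $q$ to a point outside $\overline{D}$ would be forced to cross $J\subseteq X$; this is impossible, so $\overline{D}\subseteq X$, and $\overline{D}$ is a $2$-cell by the Schoenflies theorem. Since $X$ is a Peano continuum it is locally arcwise connected and, being an ANR, locally contractible, so an arbitrary loop in $X$ is homotopic to a finite concatenation of arcs and hence, after a general-position perturbation inside the plane, to a product of Jordan curves in $X$; each factor is null-homotopic by the observation above, so $\pi_{1}(X)=1$. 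Moreover $\dim X\le 2$ and $S^{2}\setminus X$ is connected, so Alexander duality in $S^{2}$ gives $\check{H}^{q}(X;G)=0$ for all $q\ge 1$ and all coefficient groups $G$; a simply connected ANR with vanishing cohomology in positive degrees is weakly contractible, hence contractible by Whitehead's theorem. A contractible compact ANR is an AR; embedding $X$ as a closed subset of the Hilbert cube $Q$ then exhibits $X$ as a retract of $Q$. Since $Q$ has the fixed point property (Schauder--Tychonoff) and the $FPP$ is inherited by retracts, $X\in FPP$.

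I expect the ANR step to be the genuine obstacle, and that is exactly where the planarity and finite-dimensionality of $X$ are indispensable: Borsuk's local characterisation of ANR's has no analogue in infinite dimensions, and even inside the plane the passage from ``small Jordan curves bound small disks'' to the full local contractibility of $X$ uses planarity a second time, through the vanishing of $\check{H}^{2}$. A subtler point buried in the contractibility step is that a Peano continuum can carry an enormous fundamental group (the Hawaiian earring is locally connected), so reducing a general loop to a \emph{finite} product of Jordan curves genuinely needs the local contractibility established in the first step; it is finally the non-separation hypothesis --- forcing each Jordan curve in $X$ to bound --- that collapses $\pi_{1}(X)$ to the trivial group.
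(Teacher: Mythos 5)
The paper offers no proof of this proposition at all --- it is quoted directly from Borsuk's \emph{Theory of Retracts} --- so your reconstruction has to stand on its own. Its architecture (show $X$ is an ANR, show it is weakly contractible, get contractibility from Whitehead, then AR $\Rightarrow$ retract of the Hilbert cube $\Rightarrow$ $FPP$) is a legitimate route, and several of the individual steps are fine: the ANR criterion via finiteness of the set of complementary components, the Alexander duality computation of $\check H^{q}(X)$, the proof that every Jordan curve $J\subset X$ bounds a $2$-cell contained in $X$, and the final chain ``compact contractible ANR $\Rightarrow$ AR $\Rightarrow$ $FPP$.''

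The genuine gap is the passage from ``Jordan curves in $X$ bound disks in $X$'' to $\pi_{1}(X)=1$. Using local contractibility and local arcwise connectedness you can indeed homotope a loop in $X$ to a finite concatenation of arcs of $X$; but these are merely topological arcs in the plane --- they can be wild, and two of them can meet in a Cantor set --- so ``a general-position perturbation inside the plane'' is not available: any honest perturbation leaves $X$, and the usual decomposition of a polygonal closed curve into finitely many simple closed curves has no analogue for such arcs. As written, the reduction of an arbitrary loop to a \emph{finite} product of Jordan curves \emph{lying in $X$} is unjustified, and this is exactly the hard point of the theorem. A clean repair using only the tools you already invoked: since $X$ is a compact ANR, it has an open neighborhood $U\subset\mathbb R^{2}$ with a retraction $r:U\to X$; then $i_{*}:\pi_{1}(X)\to\pi_{1}(U)$ is a split monomorphism, and $\pi_{1}(U)$ is free because $U$ is an open planar set, so $\pi_{1}(X)$ is free by Nielsen--Schreier. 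Its abelianization $H_{1}(X)$ is then free abelian, while Alexander duality gives $\mathrm{Hom}(H_{1}(X),\mathbb Z)\cong\check H^{1}(X;\mathbb Z)\cong\tilde H_{0}(S^{2}\setminus X)=0$; hence $H_{1}(X)=0$ and the free group $\pi_{1}(X)$ is trivial. With that substitution your argument closes; note, though, that your Jordan-curve observation then plays no role, and that the classical proofs (Borsuk's, or the one via Carath\'eodory's prime-end theorem producing a retraction of a disk onto $X$) are rather different in spirit.
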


\begin{proposition}\cite[Theorem 5, p. 513]{Ku}\label{P2}
If a locally connected continuum $X$ in the plane separates the plane between two points, then $X$ contains a simple closed curve $S$ which separates the plane between these points. In particular, $S$ is a retract of $X$.
\end{proposition}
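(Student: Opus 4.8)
The first sentence is exactly \cite[Theorem 5, p.~513]{Ku}; the work left for us is the last sentence, that the simple closed curve $S$ produced there is a retract of $X$. The plan is to build a retraction of a large ambient subset of the plane onto $S$ and then restrict it to $X$.

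Denote by $a,b$ the two points of $\mathbb{R}^{2}\setminus X$ which $X$ --- and therefore $S$ --- separates; note $a,b\notin X$. By the Jordan curve theorem $\mathbb{R}^{2}\setminus S$ has exactly two components, the bounded one $U$ and the unbounded one $W$, with common boundary $S$; since $S$ separates $a$ from $b$, we may assume $a\in U$ and $b\in W$. Now I would invoke the Jordan--Schoenflies theorem: $\overline{U}$ is a $2$-cell with boundary $S$, and, adding the point at infinity, $\overline{W}\cup\{\infty\}$ is a $2$-cell in $S^{2}$ with boundary $S$. Hence $\overline{U}\setminus\{a\}$ and $\overline{W}$ are each homeomorphic to a half-open annulus and admit retractions onto the boundary circle $S$ (a radial retraction after identifying the cell with the closed unit disk so that $a$, resp.\ $\infty$, becomes the centre). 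These two retractions restrict to the identity on $S=\overline{U}\cap\overline{W}$, so by the pasting lemma they combine into a retraction $r\colon\mathbb{R}^{2}\setminus\{a\}\to S$.

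Finally, $a\notin X$ gives $X\subseteq\mathbb{R}^{2}\setminus\{a\}$, and $S\subseteq X$; since $r$ is the identity on $S$, the restriction $r|_{X}\colon X\to S$ is a retraction of $X$ onto $S$, which finishes the proof.

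I do not anticipate a real obstacle: the substantive step --- producing $S$ inside $X$, where local connectedness is essential --- is quoted from \cite{Ku}, and everything after that is elementary plane topology. The only places calling for a little care are verifying that the two separated points indeed lie off $X$ and on opposite sides of $S$, and checking the hypotheses of the pasting lemma when the two partial retractions are glued along $S$.
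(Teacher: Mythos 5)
Your argument is correct and matches the paper's intent: the paper simply cites Kuratowski for the existence of $S$ and treats the retraction claim as the standard consequence of the Jordan curve theorem (indeed, in the proof of Theorem~\ref{T1} it writes ``By the Jordan Curve Theorem, there exists a retraction $r:X\rightarrow S$''), which is exactly the radial-retraction-of-$\mathbb{R}^{2}\setminus\{a\}$ argument you spell out. No gaps; your care about $a,b\notin X$ and about $a,b$ lying in different complementary domains of $S$ is exactly what is needed.
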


\begin{theorem}\label{T1}
Let $X$ be a locally connected metric continuum in the plane. Then
\begin{enumerate}
\item
 $X\in FPP$ if and only if $X\in PPP$,
 \item
 properties $wLFPP$, $LFPP$, $wLPPP$ and $LPPP$ for $X$ are mutually equivalent.
 \end{enumerate}
\end{theorem}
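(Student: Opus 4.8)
The plan is to run everything off Propositions~\ref{P1} and~\ref{P2}, together with the single extra observation that a simple closed curve, being homeomorphic to the unit circle, admits a homeomorphism with \emph{no periodic point at all} (an irrational rotation).

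\emph{Part (1).} The implication $X\in FPP\Rightarrow X\in PPP$ is trivial, since a fixed point is a periodic point. For the converse I argue contrapositively. Suppose $X\notin FPP$. By Proposition~\ref{P1} the continuum $X$ must separate the plane, so there are two points lying in distinct components of the complement of $X$; by Proposition~\ref{P2}, $X$ contains a simple closed curve $S$ separating those points, and $S$ is a retract of $X$, say via $r\colon X\to S$. Pick a homeomorphism $g\colon S\to S$ with no periodic point and set $f:=g\circ r\colon X\to X$. If $f^{n}(z)=z$ for some $n\ge 1$, then $z\in f(X)\subseteq S$, and since $r$ is the identity on $S$ we get $g^{n}(z)=z$, a contradiction. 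Hence $f$ has no periodic point and $X\notin PPP$.

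\emph{Part (2).} From the definitions one reads off $LFPP\Rightarrow wLFPP\Rightarrow wLPPP$ and $LFPP\Rightarrow LPPP\Rightarrow wLPPP$ (a basis whose members have $FPP$ closures also works for $wLFPP$ and for $LPPP$; a fixed point is a periodic point). So it remains to prove $wLPPP\Rightarrow LFPP$. I will use two standing facts: (a) by Propositions~\ref{P1} and~\ref{P2}, a locally connected subcontinuum $Y$ of the plane satisfies ``$Y$ does not separate the plane $\iff$ $Y$ is an absolute retract $\iff$ $Y\in FPP$'', and if $Y$ separates the plane then $Y\notin PPP$ by the circle‑retract argument of Part~(1); (b) every Peano continuum has a basis of connected open sets with Peano closures, and if $S$ is a simple closed curve in a plane continuum $Z$ whose bounded complementary domain contains a point not in $Z$, then $S$ is a retract of $Z$ (delete that point from $\mathbb{R}^{2}$, deformation‑retract onto $S$, and restrict to $Z$). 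Now assume $X\in wLPPP$ with witnessing basis $\mathcal{B}$; fix $x_{0}\in X$ and open $W\ni x_{0}$. It suffices to find an open $B$ with $x_{0}\in B\subseteq W$ and $\overline{B}\in FPP$, for then all such $B$ form a basis realizing $LFPP$. Assume no such $B$ exists. Choose $B_{0}\in\mathcal{B}$ with $x_{0}\in B_{0}\subseteq\overline{B_{0}}\subseteq W$ and, inside $B_{0}$, a connected open $C$ with $x_{0}\in C\subseteq\overline{C}\subseteq B_{0}$ and $\overline{C}$ a Peano continuum. Then $\overline{C}\notin FPP$, so by Proposition~\ref{P1} it separates the plane, hence has a bounded complementary domain, and by Proposition~\ref{P2} it contains a simple closed curve $S\subseteq\overline{C}\subseteq B_{0}$ with a point $a\notin\overline{C}$ in the bounded complementary domain of $S$. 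If $a$ can be taken outside $X$ (for some admissible $C$), then by fact~(b) $S$ is a retract of $X$; composing the retraction with a periodic‑point‑free homeomorphism of $S$ yields, as in Part~(1), a self‑map $f$ of $X$ with $f(X)\subseteq S\subseteq B_{0}$ and no periodic point, so $f(\overline{B_{0}})\subseteq B_{0}$ while $f$ has no periodic point in $B_{0}$ — contradicting that $\mathcal{B}$ witnesses $wLPPP$. Otherwise every bounded complementary domain of every such $\overline{C}$ lies in $X$; filling them in produces a non‑separating (hence, by fact~(a), absolute‑retract) Peano continuum $\widehat{C}\subseteq X$ which is a neighbourhood of $x_{0}$ in $X$, and from this one extracts arbitrarily small neighbourhoods of $x_{0}$ with non‑separating Peano closures — a good $B$ after all, again a contradiction. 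Hence the desired $B$ exists and $X\in LFPP$.

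The bookkeeping with Propositions~\ref{P1}--\ref{P2} is routine. \textbf{The main obstacle} is the second alternative in the argument above, i.e.\ ruling out the situation in which $X$ always ``fills in'' the complementary domains of small neighbourhoods of $x_{0}$: one has to show that in that case $X$ is genuinely an absolute retract on a neighbourhood of $x_{0}$, and this requires the finer topology of plane Peano continua (the behaviour of their complementary domains and Sch\"onflies‑type arguments) rather than Propositions~\ref{P1}--\ref{P2} alone.
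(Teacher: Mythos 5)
Your overall architecture is the same as the paper's: part (1) via the circle‑retract plus periodic‑point‑free rotation, and part (2) reduced to $wLPPP\Rightarrow LFPP$ via the dichotomy ``some bounded complementary domain of a small Peano neighbourhood meets $\mathbb{R}^2\setminus X$'' (killed by the $wLPPP$‑basic set) versus ``all of them lie in $X$'' (fill them in). The gap is exactly where you flag it: the second alternative is left unproved, and the move you propose for closing it --- ``from $\widehat{C}$ one extracts arbitrarily small neighbourhoods of $x_0$ with non‑separating Peano closures'' --- is both not obviously possible and not needed. An AR neighbourhood can contain arbitrarily small separating simple closed curves (a disk already does), so small subneighbourhoods with non‑separating closures are not to be had for free; and what you actually need is $FPP$ of the closure of a small neighbourhood, which extraction from a big AR does not give you.

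The paper closes this case by arranging smallness \emph{before} filling in, not after. Given $\varepsilon>0$, take $B\in\mathcal{B}$ with $\diam(B)<\varepsilon/3$ and a connected open $V$ with $p\in V\subset\overline{V}\subset B$ and $A:=\overline{V}$ a Peano continuum. If $A$ separates the plane, every bounded component $D$ of $\mathbb{R}^2\setminus A$ lies in $X$ (else your first alternative applies, with the simple closed curve of Proposition~\ref{P2} sitting inside $A\subset B$, contradicting the $wLPPP$ property of $B$). Put $V'=V\cup\bigcup D$ and $A'=\overline{V'}=A\cup\bigcup D$. Then $\mathbb{R}^2\setminus A'$ is the unbounded component of $\mathbb{R}^2\setminus A$, hence connected, so $A'$ does not separate the plane; $A'$ is locally connected (the added $D$'s are open connected plane sets, and local connectedness at points of $A$ is a standard fact about complementary domains of Peano continua --- this is the only place any ``finer'' plane topology enters, and it is far short of a Sch\"onflies argument); hence $A'\in FPP$ by Proposition~\ref{P1}. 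Finally, $\diam D\le\diam A$ for each bounded complementary component, since the diameter of a bounded open plane set is at most the diameter of its boundary, which lies in $A$; therefore $\diam A'\le 3\diam A<\varepsilon$, and $V'$ itself is the required basic neighbourhood. Restructured this way, your argument becomes the paper's proof; as written, the concluding step of the second alternative does not go through.
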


\begin{proof} In order to show the nontrivial implications assume first
that $X\in PPP$. In view of Proposition~\ref{P1}, it suffices to observe that $X$
does not separate the plane $\mathbb{R}^{2}$. Indeed, if $X$ separates the
plane then, by Proposition~\ref{P2},  there exists a
retraction $r:S\rightarrow S$ onto a simple closed curve $S$. Then the
composition $\theta r$, where $\theta $ is a "rotation" of $S$ without
periodic points, would be a self-map of $X$ without periodic points,
contrary to the assumption.

Assume now that $X\in wLPPP$ and let $\mathcal{B}$ be a basis for $wLPPP$. We
are going to define a neighborhood basis $\mathcal{V}_{p}$ at each point $%
p\in X$ such that $\bigcup\limits_{p\in X}\mathcal{V}_{p}$ is a basis for $%
LFPP$.

For any $\varepsilon >0$ there are $B\in \mathcal{B}$ and an open connected
subset $V\subset B$ such that $p\in V\subset \overline{V}\subset B$ and $\diam(B)<
\frac{\varepsilon }{3}$. It is also well known that $V$ can be taken to be
uniformly locally connected~\cite[Theorem 3.3, p. 77]{W}, so the continuum $%
A=\overline{V}$ is locally connected~\cite[Theorem 3.6, p. 79]{W}.

If $A$ does not separate $\mathbb{R}^{2}$, then $A\in FPP$ by Proposition~\ref{P1}
and then we include $V$ in $\mathcal{V}_{p}$.

Suppose $A$ separates $\mathbb{R}^{2}$. We claim that
\begin{equation}\label{eq1} \text{\emph{Each bounded component of $\mathbb R^2\setminus A$ is
contained in $X$.}}
\end{equation}

In fact, if a bounded component $D$ of $\mathbb R^2\setminus A$
satisfies $D\setminus X\neq\emptyset$, then, by Proposition~\ref{P2},
there is a simple closed curve $S\subset A$ that separates $\mathbb{R}^{2}$
between a point $a\in D\setminus X$ and a point $b\in \mathbb R^2\setminus X$. By the Jordan Curve Theorem, there exists a retraction $%
r:X\rightarrow S$. If $\theta $ is a "rotation" of $S$ without periodic
points, then the composition $\theta r:X\rightarrow X$ has no periodic
points and maps $\overline{B}$ into $B$, contrary to the property of the $wLPPP$%
-basic set $B$.

Put
\[ A'=A\cup\bigcup\{D: \text{$D$ is a bounded component of $\mathbb R^2\setminus A$}\}\]
and
\[V'=V \cup\bigcup\{D: \text{$D$ is a bounded component of $\mathbb R^2\setminus A$}\}.\]

Notice that $\overline{V'}=A'$. By~\eqref{eq1}, $A^{\prime }$ is a
subcontinuum of $X$. Obviously, $A^{\prime }$ is locally connected and it
does not separate $\mathbb{R}^{2}$, so $A^{\prime }\in FPP$ by Proposition~\ref{P1}.
 Moreover, since the diameter of each bounded component $D$ of $\mathbb R^2\setminus A$ does not exceed the diameter of $A$, we have $\diam A'\leq \varepsilon$. So, in the case when $A$ separates $\mathbb{%
R}^{2}$, we include $V^{\prime }$ to $\mathcal{V}_{p}$.

\end{proof}

\bigskip

Recall that a continuum is a \textit{local dendrite} if each of its points
has a closed neighborhood which is a dendrite and that dendrites have $FPP$.

In the proof of the next theorem we use the following well known facts.

\begin{proposition}\cite[Theorem 1, p. 354]{Ku}\label{P3} Each simple closed
curve contained in a one-dimensional metric continuum $X$ is a retract of $X$.
\end{proposition}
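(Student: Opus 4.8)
The plan is to prove that the simple closed curve $S$ is a retract of $X$ by an obstruction-theoretic argument, using that $S$ is homeomorphic to $S^{1}=K(\mathbb Z,1)$ and that $X$ has covering dimension at most $1$. The first step is to reduce "$S$ is a retract of $X$" to a purely cohomological statement. By Bruschlinsky's theorem, for a compact metric space $Y$ the assignment $f\mapsto f^{*}(\iota)$, where $\iota$ generates $\check H^{1}(S^{1};\mathbb Z)$, is a natural bijection $[Y,S^{1}]\cong\check H^{1}(Y;\mathbb Z)$. Applying naturality to the inclusion $i\colon S\hookrightarrow X$ and identifying $S$ with $S^{1}$, there is a map $X\to S$ homotopic on $S$ to $\mathrm{id}_{S}$ if and only if the restriction homomorphism $i^{*}\colon\check H^{1}(X;\mathbb Z)\to\check H^{1}(S;\mathbb Z)\cong\mathbb Z$ is surjective. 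To promote "homotopic to the identity on $S$" to an honest retraction, I would use that $S$, being a closed ANR-subset of the metric space $X$, is the base of a cofibration $S\hookrightarrow X$, so the homotopy extension property turns any $f\colon X\to S$ with $f|_{S}\simeq\mathrm{id}_{S}$ into an $f'\colon X\to S$ with $f'|_{S}=\mathrm{id}_{S}$.

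The second step is to establish the surjectivity of $i^{*}$ from the dimension hypothesis. From the long exact Čech cohomology sequence of the compact pair $(X,S)$,
\[
\check H^{1}(X;\mathbb Z)\xrightarrow{\ i^{*}\ }\check H^{1}(S;\mathbb Z)\longrightarrow\check H^{2}(X,S;\mathbb Z),
\]
so it suffices to show $\check H^{2}(X,S;\mathbb Z)=0$. Since $X$ is compact and $S$ is closed, $\check H^{2}(X,S;\mathbb Z)\cong H^{2}_{c}(X\setminus S;\mathbb Z)$, and $X\setminus S$ is a locally compact separable metric space with $\dim(X\setminus S)\le\dim X\le 1$; as cohomology with compact supports vanishes above the covering dimension, $H^{2}_{c}(X\setminus S;\mathbb Z)=0$. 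This closes the argument.

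The step I expect to be the only delicate point is purely bookkeeping: since $X$ is not a polyhedron, every cohomology group here must be Čech (equivalently Alexander--Spanier), and each classical input has to be invoked in the form valid for compact metric spaces — Bruschlinsky's theorem for such spaces, the identification of the relative Čech cohomology of a compact pair with the compactly supported cohomology of the complement, the vanishing of $H^{*}_{c}$ above the covering dimension, and the homotopy extension property for closed ANR-subsets of metric spaces. If one prefers to bypass the cohomology-with-supports computation, one can instead argue directly with obstruction theory: $S^{1}$ being an ANR, $\mathrm{id}_{S}$ extends to a retraction $\rho$ of some open neighbourhood $U\supseteq S$ in $X$; choosing an open $V$ with $S\subseteq\overline V\subseteq U$, the single obstruction to extending $\rho|_{\overline V}\colon\overline V\to S^{1}$ over all of $X$ lies in $\check H^{2}(X,\overline V;\mathbb Z)\cong H^{2}_{c}(X\setminus\overline V;\mathbb Z)$, which vanishes because $\dim(X\setminus\overline V)\le 1$; the resulting extension $g\colon X\to S$ already satisfies $g|_{S}=\mathrm{id}_{S}$ and is the desired retraction.
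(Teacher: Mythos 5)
Your argument is correct, but it is a considerable detour compared with the proof the citation points to. The paper gives no proof of Proposition~\ref{P3}: it invokes Kuratowski, and the classical argument there is a single application of the Alexandroff--Hurewicz--Wallman characterization of dimension: a separable metric space $X$ satisfies $\dim X\le n$ if and only if every continuous map of a closed subset of $X$ into $S^{n}$ extends over all of $X$. Taking $n=1$, the closed set $S\cong S^{1}$ and the map $\mathrm{id}_{S}$ immediately produces the retraction, with no cohomology at all. Your route --- Bruschlinsky's classification, the exact sequence of the pair, the identification $\check H^{2}(X,S;\mathbb Z)\cong H^{2}_{c}(X\setminus S;\mathbb Z)$ by strong excision for compact pairs, the vanishing of compactly supported \v Cech cohomology above the covering dimension of the (still at most one-dimensional) subspace $X\setminus S$, and a homotopy extension step to convert $f|_{S}\simeq\mathrm{id}_{S}$ into an honest retraction --- reassembles precisely this extension theorem from its cohomological ingredients, and each step is a standard fact in the stated generality, so the proof goes through; indeed your obstruction-theoretic variant in the closing paragraph is essentially the textbook proof of the extension theorem itself. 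One small imprecision: the fact you actually need, and which is true, is Borsuk's homotopy extension theorem for maps into an ANR target, valid for every closed subset of a metric space; you do not need (and should not casually assert) that the inclusion of a closed ANR subset is a cofibration in the unrestricted sense. What your approach buys is a self-contained derivation from homotopy classification and duality; what it costs is the full \v Cech/Alexander--Spanier apparatus where one sentence of classical dimension theory suffices.
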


\begin{proposition}\cite[Theorems 4 and 5, pp. 303-304]{Ku}\label{P4} If a
locally connected metric continuum is not a local dendrite, then it contains
a sequence of simple closed curves with diameters converging to $0$.
\end{proposition}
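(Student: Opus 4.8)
The plan is to prove the contrapositive: I will show that if $X$ is a locally connected metric continuum that contains no sequence of simple closed curves with diameters tending to $0$, then $X$ is a local dendrite. The failure of the conclusion to be proved supplies a uniform bound, since saying there is no such sequence is the same as saying there is no simple closed curve of arbitrarily small diameter; hence there is a $\delta>0$ such that every simple closed curve contained in $X$ has diameter at least $\delta$. (If $X$ contains no simple closed curve at all this holds vacuously, and such an $X$ is already a dendrite, hence a local dendrite.) Thus the whole problem reduces to producing, at each point, a dendrite neighborhood under the hypothesis that small simple closed curves are forbidden.

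Fix a point $p\in X$. Exactly as in the proof of Theorem~\ref{T1}, local connectedness lets me choose an open connected set $V$ with $p\in V$ and $\diam(\overline{V})<\delta$ which is moreover uniformly locally connected~\cite[Theorem 3.3, p. 77]{W}; then $A=\overline{V}$ is a locally connected continuum~\cite[Theorem 3.6, p. 79]{W}, being compact, connected, and of diameter less than $\delta$. The key observation is that $A$ can contain no simple closed curve: a simple closed curve lying in $A\subset X$ would be a simple closed curve in $X$ of diameter at most $\diam(A)<\delta$, contradicting the uniform lower bound.

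Now I invoke the standard characterization that a locally connected continuum containing no simple closed curve is a dendrite. Applying it to $A$ shows that $A=\overline{V}$ is a dendrite, so $p$ has a dendrite closed neighborhood. As $p$ was arbitrary, $X$ is a local dendrite, which establishes the contrapositive and hence the proposition.

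The step I expect to carry the weight is the second one: extracting, at every point, an arbitrarily small connected neighborhood whose closure is a \emph{locally connected continuum}, for it is precisely this that converts the diameter bound on $V$ into a diameter bound on a genuine subcontinuum and lets the dendrite characterization apply. Once those neighborhoods are in hand the argument is essentially forced, since the forbidden-small-curve hypothesis immediately strips each of them of every simple closed curve.
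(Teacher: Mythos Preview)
Your argument is correct. Note, however, that the paper does not supply its own proof of this proposition: it is quoted as a known result from Kuratowski~\cite[Theorems 4 and 5, pp.~303--304]{Ku} and then used as a black box in the subsequent theorem. So there is no ``paper's proof'' to compare against; what you have written is a clean self-contained justification, relying on the same Wilder references already invoked in Theorem~\ref{T1} together with the standard characterization of dendrites as locally connected continua containing no simple closed curve.
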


\begin{theorem} The following conditions are equivalent for a locally
connected one-dimensional metric continuum $X$.

\begin{enumerate}
\item
$X\in LPPP$,
\item
$X\in wLPPP$,
\item
$X$ is a local dendrite,
\item
$X\in LFPP$,
\item
$X\in wLFPP$.
\end{enumerate}
\end{theorem}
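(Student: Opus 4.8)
The plan is to prove the cycle of implications $(1)\Rightarrow(2)\Rightarrow(3)\Rightarrow(4)\Rightarrow(1)$ together with the extra link $(4)\Rightarrow(5)\Rightarrow(2)$; this chain visits all five conditions, so it yields the full equivalence. Four of these implications are essentially free: $(1)\Rightarrow(2)$ and $(4)\Rightarrow(5)$ are the general facts $LPPP\Rightarrow wLPPP$ and $LFPP\Rightarrow wLFPP$ recorded just after the Definition, while $(4)\Rightarrow(1)$ and $(5)\Rightarrow(2)$ hold because a fixed point is a periodic point (of period $1$), so any open basis witnessing $LFPP$ (resp. $wLFPP$) also witnesses $LPPP$ (resp. $wLPPP$). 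Hence the substance of the theorem is concentrated in the two implications $(3)\Rightarrow(4)$ and $(2)\Rightarrow(3)$.

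For $(3)\Rightarrow(4)$ I would argue directly. Suppose $X$ is a local dendrite and fix $p\in X$ together with a closed neighborhood $D_p$ of $p$ that is a dendrite. Since $D_p$ is locally connected and $p\in\operatorname{int}D_p$, there are arbitrarily small connected open sets $U$ with $p\in U\subset\overline{U}\subset D_p$. Then $\overline{U}$ is a subcontinuum of the dendrite $D_p$, hence is itself a dendrite, hence $\overline{U}\in FPP$. Letting $\mathcal B$ be the collection of all such sets $U$ as $p$ ranges over $X$, we obtain an open basis with $\overline{B}\in FPP$ for every $B\in\mathcal B$, that is, $X\in LFPP$.

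The heart of the proof is $(2)\Rightarrow(3)$, which I would establish by contraposition: assuming $X$ is \emph{not} a local dendrite, I want to show that no open basis can serve as a $wLPPP$-basis. By Proposition~\ref{P4}, $X$ contains simple closed curves $S_n$ with $\diam S_n\to 0$. Picking a point in each $S_n$ and passing to a convergent subsequence, I may assume these points converge to some $p\in X$; since the diameters tend to $0$, every neighborhood of $p$ then contains $S_n$ for all sufficiently large $n$. Now let $\mathcal B$ be an arbitrary open basis and choose any $B\in\mathcal B$ with $p\in B$; then $S_n\subset B$ for some $n$. By Proposition~\ref{P3} there is a retraction $r\colon X\to S_n$, and if $\theta$ is a homeomorphism of $S_n$ without periodic points (transport an irrational rotation across a homeomorphism of $S_n$ onto the unit circle), then the self-map $f=\theta\circ r$ of $X$ satisfies $f(X)\subset S_n$. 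A routine check shows $f$ has no periodic point: if $f^m(x)=x$ with $m\ge 1$, then $x\in f(X)\subset S_n$, and on $S_n$ the map $f$ restricts to $\theta$, so $f^j(x)=\theta^j(x)$ for all $j\ge 0$, forcing $x$ to be a periodic point of $\theta$, which is impossible. Since $f(\overline{B})\subset S_n\subset B$ while $f$ has no periodic point in $B$, the basic set $B$ violates the defining property of a $wLPPP$-basis; as $\mathcal B$ was arbitrary, $X\notin wLPPP$.

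I expect the one genuinely delicate point to be this last implication, specifically the passage to the accumulation point $p$: one must ensure that for an arbitrary basis $\mathcal B$ some basic neighborhood of $p$ actually engulfs one of the shrinking curves $S_n$ (this is exactly why the $S_n$ were arranged to cluster at a single point), and one must verify carefully that $\theta\circ r$ is genuinely periodic-point-free, not merely fixed-point-free. The remaining ingredients — that subcontinua of dendrites are dendrites and have $FPP$, and the formal implications among $LFPP$, $LPPP$, $wLFPP$, and $wLPPP$ — are routine.
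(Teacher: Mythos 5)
Your proposal is correct and follows essentially the same route as the paper: the only nontrivial implication is $(2)\Rightarrow(3)$, which you prove exactly as the paper does, by extracting from Proposition~\ref{P4} a sequence of simple closed curves converging to a point $p$, retracting $X$ onto a curve $S_n$ contained in a basic neighborhood of $p$ via Proposition~\ref{P3}, and composing with a periodic-point-free rotation of $S_n$. The remaining implications, which the paper dismisses as obvious, are filled in by you with the standard arguments (subcontinua of dendrites are dendrites with $FPP$, and fixed points are periodic points), all of which check out.
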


\begin{proof} (2) $\Rightarrow $ (3). Suppose $X$ is not a local dendrite.
Then, by Proposition~\ref{P4}, there is a sequence of simple closed curves $%
S_{n}\subset X$ converging (in the Hausdorff metric) to a singleton $\{p\}$.
If $\mathcal{B}$ is an arbitrary open basis in $X$ and $B\in \mathcal{B}$
contains $p$, then $B$ contains a simple closed curve $S=S_{n}$ for some $%
n\in \mathbb{N}$. By Proposition~\ref{P3}, there is a retraction $r:X\rightarrow S$.
 As before, compose $r$ with a map $\theta :S\rightarrow S$ without
periodic points to get a map $f:X\rightarrow X$ without periodic points such
that $f(\overline{B})\subset B$.

All remaining implications are obvious.
\end{proof}

\bigskip

Next we present an example of a plane continuum $X\in wLPPP \setminus
wLFPP$ having an open basis $\mathcal{B}$ such that, for each map $%
f:X\rightarrow X\,$ and each $B\in \mathcal{B}$, $f$ has a periodic point
of period at most two in $B$. It was observed in~\cite{KOU} that a harmonic
fan $F$ is a plane continuum in $LPPP\setminus wLFPP$ but it is easy
to see that in $F$ it is not possible to find a bound for the periods, in
the described sense.

\begin{example}\label{ex1}
We use the Ma\'{c}kowiak's example of a chainable,
hereditarily decomposable continuum $M$ which is rigid, i.e., it admits no non-constant,
non-identity maps between subcontinua~\cite{Ma}. We locate $M$ in the plane semi-annulus
\[
\{z=(x,y):1\leq \left\vert z\right\vert \leq 2, \, 0\leq y\}
\]
so that there
are symmetric points
\begin{center}
$a\in M\cap ([1,2]\times \{0\})$ and $-a\in M\cap ([-2,-1]\times \{0\})$.
\end{center}

We may also ask that

\begin{center}
$\{a\}=M\cap ([1,2]\times \{0\})$ and $\{-a\}=M\cap ([-2,-1]\times \{0\})$.
\end{center}

In fact, we may assume that $a=(2,0)$.

Define $Y=M\cup (-M)$. Let $%
h:Y\rightarrow Y$ be the homeomorphism given by $h(y)=-y$. Then $h$ has no
fixed points in $Y$ and $h\circ h$ is the identity on $Y$.

Fix an open set $U$ in $Y$ such that $a\in U$ and $V=-U$ is disjoint from  $U$.

Our example $X$ is an infinite wedge of homeomorphic copies $Y_n$ of $Y$, $n\in \mathbb N$, intersecting at the point $v=(0,0)\in \mathbb R^2$.

To describe $X$ more precisely, consider  an auxiliary
sequence of convex triangles $T_{n}$ in $\mathbb{R}^{2}$ such that $v$ is a
vertex of each $T_{n}$, $T_{n}\cap T_{m}=\{v\}$ if $n\neq m$ and $\lim
T_{n}=\{v\}$ in the Hausdorff metric. For each $n\in \mathbb{N}$, let $%
Y_{n}$ be a homeomorphic copy of $Y$ located in $T_n$ such that the point $%
a_{n}$ corresponding  to $a$ equals $v$. Let $-a_{n}$ be the point in $Y_{n}$
that corresponds to $-a$. We may assume that $d(v,-a_{n})=\max \{d(v,x):x\in
Y_{n}\}$, where $d$ is a metric in $Y_n$. Let $M_{n}$ (resp., $-M_{n}$, $U_{n}$ and $V_{n}$) be the
 subcontinuum of $Y_{n}$ that corresponds to $M$
(resp., $-M$, $U$ and $V$) and let $h_{n}:Y_{n}\rightarrow Y_{n}$ be the
homeomorphism that corresponds to $h$. Then
\begin{multline*}
U_{n}\cap V_{n}=\emptyset, \quad -M_{n}=h_{n}(M_{n}), \quad M_{n}=h_{n}(-M_{n}), \quad -a_{n}=h_{n}(a_{n}), \\
V_{n}=h_{n}(U_{n}), \quad\text{$h_{n}\circ h_{n}$ is the identity on $M_{n}$}\\
\text{and}\quad \text{$h_n$ is a fixed-point-free map}.
\end{multline*}

Define
\[X=\bigcup \{Y_{n}:n\in \mathbb{N}\}.\]

In order to see that $X\notin wLFPP$, take any open basis $\mathcal{B}$ of $X$.
Fix $B\in \mathcal{B}$ with $v\in B$. Let $n\in \mathbb{N}$ be such
that $Y_{n}\subset B$. Let $f:X\rightarrow X$ be defined as

\begin{center}
$f(x)=\left\{
\begin{array}{cc}
h_{n}(v)\text{,} & \text{if }x\notin Y_{n}\text{,} \\
h_{n}(x)\text{,} & \text{if }x\in Y_{n}\text{.}%
\end{array}%
\right. $
\end{center}

Clearly, $f$ is a continuous fixed-point-free map such that $f(\overline{B})\subset
B$. Therefore, $X\notin wLFPP$.

\smallskip

We are going to define a neighborhood basis $\mathcal{V}_{p}$ at each point $%
p\in X$ such that $\bigcup\limits_{p\in X}\mathcal{V}_{p}$ is a basis for $%
wLPPP$. In every case $\mathcal{V}_{p}$ is of the form
\[
\mathcal{V}_{p}=\{B(p,\varepsilon )\cap X:0<\varepsilon <\varepsilon _{p}\},
\]
 where $B(p,\varepsilon )$ is the Euclidean $\varepsilon $-ball in $\mathbb{R}^{2}$. So, we need to say how to choose $\varepsilon _{p}$. For $%
p=v$, let $\varepsilon _{p}=1$. If $p\in Y_{n}$ $\backslash $ $%
\{a_{n},-a_{n}\}$ for some $n\in \mathbb{N}$, we choose $\varepsilon _{p}>0$
such that
\[
B(p,2\varepsilon _{p})\cap X\subset Y_{n}\setminus
\{a_{n},-a_{n}\},
\]
\[ B(p,2\varepsilon _{p})\cap  -M_{n}=\emptyset
 \quad\text{for}\quad p\in M_{n} \quad\text{and}\quad B(p,2\varepsilon _{p})\cap M_{n}=\emptyset \quad\text{for}\quad p\in -M_{n}.
\]
 Finally, if $p=-a_{n}$ for some $n$, we choose $\varepsilon _{p}>0$ such
that $B(p,\varepsilon _{p})\cap X\subset V_{n}$.

We will need the following property of $X$.

\begin{property}\label{pro1} Let $f:X\rightarrow X$ be a continuous map. If there exists a
nondegenerate subcontinuum $A$ of $M_{n}$ (resp., $-M_{n}$) such that $%
f(A)=\{q\}$ for some $q\neq v$, then $f(M_{n})=\{q\}$ (resp., $%
f(-M_{n})=\{q\}$).
\end{property}

To prove Property~\ref{pro1}, consider the hyperspace of subcontinua $C(X)$ of $X$ with the Hausdorff
metric and use the fact that given subcontinua $A$ and $B$ of $X$ such that
$A\subset B$, there exists a continuous map $\alpha :[0,1]\rightarrow C(X)$
such that $\alpha (0)=A$, $\alpha (1)=B$ and $\alpha (s)\subset \alpha (t)$
if $0\leq s\leq t\leq 1$~\cite[Theorem 14.6]{IN}.

In our setting, let $\alpha
:[0,1]\rightarrow C(X)$ be such such that $\alpha (0)=A$, $\alpha (1)=M_{n}$ and $%
\alpha (s)\subset \alpha (t)$ if $0\leq s\leq t\leq 1$. Let $r\in \mathbb{N}$
be such that $q\in Y_{r}=M_{r}\cup (-M_{r})$. We may assume that $q\in M_{r}$%
. Let $t_{0}=\max \{t\in \lbrack 0,1]:f(\alpha (t))\subset M_{r}\}$. Notice
that $f|_{\alpha (t_{0})}: M_{n}\supset\alpha (t_{0}) \rightarrow M_{r}$. By
the rigidity of $M$, the map $f|_{\alpha (t_{0})}$ is either an embedding or
a constant map. Since $A\subset \alpha (t_{0})$, $f|_{\alpha (t_{0})}$
cannot be one-to-one, so $f|_{\alpha (t_{0})}$ is a constant map. Thus, $%
f(\alpha (t_{0}))=\{q\}$.

We claim that $t_{0}=1$. Suppose to the contrary that $t_{0}<1$. If $q\in M_{r}\setminus \{-a_{r}\}$ then, since $q\neq v$ and $M_{r}\setminus \{-a_{r},v\}$ is open in $X$,
there exists $t_{0}<t<1$ such
that $f(\alpha (t))\subset M_{r}\setminus\{-a_{r},v\}$,
contradicting the choice of $t_{0}$. Hence, $q=-a_{r}$. Since $q$ belongs to the open subset $Y_{r}\setminus\{v\}$ of $X$,  there
exists $t_{0}<t<1$ such that $f(\alpha (t))\subset Y_{r}\setminus
\{v\}$. Now,
\[
\bigl(M_{r}\cap f(\alpha (t))\bigr)\cap \bigl(-M_{r}\cap f(\alpha
(t))\bigr)=\{-a_{r}\},
\]
so both sets $M_{r}\cap f(\alpha (t))$ and $%
-M_{r}\cap f(\alpha (t))$ are subcontinua of $f(\alpha (t))$ and each of
them is a retract of $f(\alpha (t))$. Let $\varphi :f(\alpha (t))\rightarrow
M_{r}\cap f(\alpha (t))$ be the retraction that sends $-M_{r}\cap f(\alpha
(t))$ to $q$. By the rigidity of $M$, the map $\varphi \circ f:\alpha
(t)\rightarrow M_{r}\cap f(\alpha (t))$, is either an embedding or a
constant map. Since $A\subset \alpha (t)$ and $\varphi (f(A))=\{q\}$, $%
\varphi \circ f$ is the constant map that sends $\alpha (t)$ to $q$. Thus,
for each $x\in \alpha (t)$ that satisfies $f(x)\in M_{r}$, we have $f(x)=q$.
Similarly, $f\bigl(\alpha (t)\cap f^{-1}(-M_{r})\bigr)=\{q\}$. Thus $%
f(\alpha (t))=\{q\}$. This contradicts the choice of $t_{0}$ and shows that
$t_{0}=1$. Hence, $f(M_{n})=f(\alpha (1))=\{q\}$ and the proof of Property~\ref{pro1} is finished.

\

In order to show that $\mathcal{B}=\bigcup\limits_{p\in X}\mathcal{V}_{p}$
is a basis for $wLPPP$, let $f:X\rightarrow X$ be a continuous map, and let $%
p\in X$ and $0<\varepsilon \,<\varepsilon _{p}$ be such that $%
f(\overline{B})\subset B$, where $B=B(p,\varepsilon )\cap X$.

We are going to to show that $f$ has a periodic point of period at most two.
We analyze 3 cases.

\begin{case} $p\in (M_{n}\setminus\{v,-a_{n}\})\cup
(-M_{n}\setminus  \{v,-a_{n}\})$ for some $n\in \mathbb{N}$.
\end{case}

In this case, we suppose that $p\in M_{n}\setminus \{v,-a_{n}\}$, the
other case is similar. By the choice of $\varepsilon _{p}$, $\overline{B}\subset
M_{n}\setminus \{v,-a_{n}\}$. By~\cite[Theorem 14.6]{IN}, there exists a
nondegenerate subcontinuum $A$ of $X$ such that $p\in A\subset B$. By the rigidity of $M_n$, $f|_{A}$ is
either the natural embedding or a constant map. If $f(a)=a$
for each $a\in A$, we
are done. If  $f(a)=q\in M_{n}$ for
each $a\in A$, then $f(M_{n})=\{q\}$ by Property~\ref{pro1}.
Hence, $q$ is a fixed point of $f$ in $B$.

\begin{case}
$p=-a_{n}$ for some $n\in \mathbb{N}$.
\end{case}

By the choice of $\varepsilon _{p}$, $B\subset V_{n}\subset Y_{n}\setminus\{v\}$.

We consider two subcases.

\begin{claim} There exists a nondegenerate subcontinuum $E$ of $B$
such that $f(E)$ is a one-point set.
\end{claim}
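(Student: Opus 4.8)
The plan is to produce the collapsing subcontinuum by transporting the rigidity of $M_n$ to the fold point $-a_n$. First I would record the local picture: since $M$ lies in the upper half-plane and $-M$ in the lower one and $M$ meets the $x$-axis only at $a$ and $-a$, we have $M_n\cap(-M_n)=\{v,-a_n\}$, so inside $V_n\subset Y_n\setminus\{v\}$ the two copies meet only at $-a_n$. Using the order-arc theorem \cite[Theorem 14.6]{IN} in $C(M_n)$, I would take an order arc from the degenerate continuum $\{-a_n\}$ up to $M_n$ and select from it a nondegenerate subcontinuum $A\subset B\cap M_n$ with $-a_n\in A$ of diameter so small that $f(A)\subset f(\overline{B})\subset B\subset V_n$. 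This $A$ is the raw material for $E$.

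Next I would analyse $f(A)\subset V_n=\bigl(V_n\cap M_n\bigr)\cup\bigl(V_n\cap(-M_n)\bigr)$. When $f(A)$ stays in a single copy the rigidity of $M_n$ applies directly to $f|_A$ (after composing with $h_n$ in the $-M_n$-case), yielding the alternative ``constant or inclusion''; the constant branch is precisely $E=A$. The substantive case is when $f(A)$ straddles $-a_n$, so that $C_{+}=f(A)\cap M_n$ and $C_{-}=f(A)\cap(-M_n)$ are subcontinua with $C_{+}\cap C_{-}=\{-a_n\}$, each a retract of $f(A)$. Mirroring the proof of Property~\ref{pro1}, I would form the retraction $\varphi\colon f(A)\to C_{+}$ collapsing $C_{-}$ to $-a_n$, together with its mirror onto $C_{-}$ followed by $h_n$, and push $f|_A$ through them; rigidity of $M_n$ then forces each composite $A\to M_n$ to be constant or the inclusion.

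The main obstacle is converting this dichotomy into an honest collapse $f(E)=\{q\}$, that is, excluding the inclusion horn. In the straddling configuration I would run the two retractions simultaneously, exactly as in Property~\ref{pro1}: if one composite were the inclusion, then $f$ would act as the identity on the part of $A$ mapped into $C_{+}$; reconciling this on the connected set $A$ with the behaviour on the $C_{-}$-side, and using $C_{+}\cap C_{-}=\{-a_n\}$, pins the relevant constant value to $-a_n$ and drives all of $f(A)$ into the single point $-a_n$, so $E=A$ works. When both composites are constant one gets $f(A)=\{q\}$ outright, again with $E=A$. The delicate point throughout is this straddling analysis, where the single intersection $C_{+}\cap C_{-}=\{-a_n\}$ and the retraction--rigidity mechanism of Property~\ref{pro1} must be combined to preclude a genuine embedding of $A$; the residual possibility that $f|_A$ is the plain inclusion in the single-copy situation is exactly what is isolated by the complementary subcase, so that establishing the present subcase amounts to exhibiting, in the collapsing branch, the nondegenerate $E\subset B$ with $f(E)$ a one-point set.
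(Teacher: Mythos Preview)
You have misread the role of this statement. In the paper the environment \texttt{claim} is declared via \verb|\newtheorem{claim}{Subcase}[case]|, so what is typeset here is \emph{Subcase~2.1}, not a proposition to be demonstrated. The sentence ``There exists a nondegenerate subcontinuum $E$ of $B$ such that $f(E)$ is a one-point set'' is a \emph{hypothesis} for a case split; Subcase~2.2 is its negation. The paper does not prove the existence of such an $E$; rather, assuming such an $E$ is given with $f(E)=\{q\}$, it shows that $f$ has a fixed point in $B$ (by invoking Property~\ref{pro1} to get $f(M_n)=\{q\}$, and then, if $q\in -M_n\setminus\{v,-a_n\}$, finding a small subcontinuum $K\subset -M_n$ near $-a_n$ on which rigidity forces $f(K)$ to be a singleton, so Property~\ref{pro1} again gives $f(-M_n)=\{q\}$ and hence $f(q)=q$).

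Your proposal is therefore aimed at the wrong target: you spend the whole argument trying to manufacture $E$, which is exactly what the subcase already hands you, and you never touch the actual content of the paper's argument in this subcase, namely the derivation of a fixed point from the collapse. Even taken on its own terms your final paragraph does not recover the paper's dichotomy: you analyse a single continuum $A\subset B\cap M_n$ and conclude that either $f|_A$ collapses or $f|_A$ is the inclusion, but the complementary Subcase~2.2 asserts that \emph{every} nondegenerate $E\subset B$ has nondegenerate image, which does not follow from the behaviour of one particular $A$. The correct thing to do for this ``statement'' is simply to assume a collapsing $E$ exists and prove the fixed point, as the paper does.
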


Suppose that $\{q\}=f(E)$. Note that $q\in B$.
There exists a nondegenerate subcontinuum $G$ of $E\setminus
\{v,-a_{n}\}$. We may assume that $G\subset M_{n}$. By Property~\ref{pro1}, $f(M_{n})=\{q\}$,  in particular, $f(-a_{n})=q$. If $q\in M_{n}$, then $q$
is a fixed point for $f$ in $B$ and we are done. Suppose then that $q\in
-M_{n}\setminus\{v,-a_{n}\}$. Let $\eta >0$ be such that
\[
B(-a_{n},\eta )\cap B(q,\eta )=\emptyset \quad\text{and}\quad B(q,\eta )\cap X\subset
-M_{n}\setminus\{v,-a_{n}\}.
\]
 Let  $0<\delta
<\min \{\varepsilon ,\eta \}$ and $f(B(-a_{n},\delta )\cap X)\subset
B(q,\eta )$. Since $B(-a_{n},\delta )\cap $ $-M_{n}$ is a nonempty open
subset of the continuum $-M_{n}$, there exists a nondegenerate subcontinuum $%
K$ of $-M_{n}$ such that $K\subset B(-a_{n},\delta )$. Then $f(K)\subset
-M_{n}\setminus\{v,-a_{n}\}$. By the rigidity of $-M_{n}$, $f|_{K}$
is the identity on $K$ or $f(K)$ is a singleton. Since $K\subset
B(-a_{n},\delta )$ and $f(K)\subset B(q,\eta )$, $f|_{K}$  cannot be the
identity on $K$. Thus, $f(K)=\{w\}$, for some $w\in -M_{n}\setminus
\{v,-a_{n}\}$ and,  by Property~\ref{pro1}, $f(-M_{n})=\{w\}=\{q\}$. Hence, $q$ is a fixed point for $f$ in $B$.

\begin{claim}\label{sub2} For every nondegenerate subcontinuum $E$ of $B$, $f(E)$
is nondegenerate.
\end{claim}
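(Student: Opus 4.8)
The final statement supplies the standing hypothesis of this subcase, that $f$ maps every nondegenerate subcontinuum of $B$ onto a nondegenerate set; under it the goal, as throughout this Case, is to exhibit a periodic point of $f$ of period at most two in $B$ (and this may be a genuine period-two point, not necessarily a fixed one). The geometric input I would exploit is that, since $B\subset V_n\subset Y_n\setminus\{v\}$ and $M_n\cap(-M_n)=\{v,-a_n\}$, the pieces $M_n\setminus\{v\}$ and $-M_n\setminus\{v\}$ are closed in $Y_n\setminus\{v\}$ and meet only at $-a_n$. Hence there is a retraction $\rho\colon Y_n\setminus\{v\}\to M_n\setminus\{v\}$ that is the identity on $M_n\setminus\{v\}$ and collapses $-M_n\setminus\{v\}$ to the point $-a_n$, and symmetrically a retraction $\sigma$ onto $-M_n\setminus\{v\}$. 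Since $f(\overline{B})\subset B\subset Y_n\setminus\{v\}$, both compositions $\rho\circ f$ and $\sigma\circ f$ are defined on $\overline{B}$.

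First I would fix a nondegenerate subcontinuum $A\subset B\cap M_n$ (such a continuum exists by the boundary-bumping argument already invoked through \cite[Theorem 14.6]{IN}). Then $\rho\circ f|_A$ is a map between subcontinua of the rigid continuum $M_n$, so by rigidity it is either the inclusion or constant. If it is the inclusion, then $\rho(f(x))=x$ for $x\in A$; since $\rho^{-1}(x)=\{x\}$ for every $x\in M_n\setminus\{v,-a_n\}$, this forces $f$ to be the identity on the dense set $A\setminus\{-a_n\}$, hence on all of $A$, and we have found fixed points in $B$. If it is constant, the constant value cannot lie in $M_n\setminus\{v,-a_n\}$ (that would make $f(A)$ degenerate, against the subcase hypothesis), so it equals $-a_n$ and $f(A)\subset\rho^{-1}(-a_n)=-M_n\setminus\{v\}$. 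In this latter situation $h_n\circ f|_A$ is again a non-constant map between subcontinua of $M_n$, so rigidity gives $h_n(f(x))=x$, that is $f|_A=h_n|_A$, with $A'=f(A)=h_n(A)$ a nondegenerate subcontinuum of $B\cap(-M_n)$.

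I would then repeat the same dichotomy for $A'$, using $\sigma$ in place of $\rho$: either $f$ is the identity on $A'$, giving a fixed point in $B$, or $f|_{A'}=h_n|_{A'}$. In the last case, for any $x\in A$ we get $f(x)=h_n(x)\in A'$ and $f(f(x))=h_n(h_n(x))=x$, while $f(x)=h_n(x)\neq x$ because $h_n$ is fixed-point-free; thus $x$ is a point of period exactly two in $B$. In every branch $f$ has a periodic point of period at most two in $B$, which finishes the subcase. The step I expect to be the main obstacle is setting up the retraction–rigidity dichotomy cleanly: one must verify that the two copies meet only at $-a_n$ inside $V_n$ so that $\rho$ and $\sigma$ exist, and then confirm that composing with these retractions (and with $h_n$) always produces a legitimate map between subcontinua of a rigid copy of $M$. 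This is precisely what rules out the a~priori troublesome possibility that $f(A)$ straddles both $M_n$ and $-M_n$, and it is what makes the conclusion an honest ``period at most two'' rather than a fixed point at a prescribed location.
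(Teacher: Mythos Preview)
Your argument is correct, but it follows a different path from the paper's. The paper does not introduce the retractions $\rho,\sigma$; instead it first passes to a smaller nondegenerate subcontinuum $E$ disjoint from $\{v,-a_n\}$ and from $f^{-1}(\{v,-a_n\})$ (possible because $f(E)$ is nondegenerate), so that both $E$ and $f(E)$ lie entirely in one of $M_n$, $-M_n$. Rigidity (possibly after composing with $h_n$) then gives directly that $f|_E$ is the identity or $f|_E=h_n|_E$, and the second alternative is eliminated by the special choice of the basic neighbourhood: since $E\subset B\subset V_n$ one gets $f(E)=h_n(E)\subset h_n(V_n)=U_n$, while also $f(E)\subset B\subset V_n$, contradicting $U_n\cap V_n=\emptyset$. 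Thus the paper actually obtains a \emph{fixed} point in $B$ in this subcase, not merely a point of period at most two. Your retraction-and-iterate method is valid and has the virtue of making no use of the auxiliary sets $U_n,V_n$ at all; on the other hand it is longer and misses the sharper conclusion---indeed your final ``period exactly two'' branch is vacuous, for the same $U_n\cap V_n=\emptyset$ reason.
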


Since $B$ is open and nonempty in $X$, there exists a nondegenerate
subcontinuum  $E\subset B$. Since $E\setminus
\{v,-a_{n}\}$ is a nonempty open subset of $E$, we may assume that $E\cap
\{v,-a_{n}\}=\emptyset $. Moreover, the set $f(E)$ being nondegenerate, $E\setminus
f^{-1}(\{v,-a_{n}\})$ is a nonempty open subset of $E$. Thus, we may also
assume that $E\cap f^{-1}(\{v,-a_{n}\})=\emptyset $. Then $E\subset M_{n}$
or $E\subset -M_{n}$ and $f(E)\subset M_{n}$ or $f(E)\subset -M_{n}$. We may
assume that $E\subset M_{n}$. Recall that $h_{n}(-M_{n})=M_{n}$ and $%
h_{n}(M_{n})=-M_{n}$. Thus, $f(E)\subset M_{n}$ or $h_{n}(f(E))\subset M_{n}$%
. Since $f(E)$ is nondegenerate, the rigidity of $M_{n}$ implies that $%
f(e)=e $ for each $e\in E$ or $h_{n}(f(e))=e$ for each $e\in E$. In the
first case, there are fixed points  of $f$
in $B$. Hence, we may assume that  $%
h_{n}(f(E))\subset M_{n}$ and $h_{n}(f(e))=e$ for each $e\in E$. Fix a point
$e_{0}\in E\subset B\subset V_{n}$. Since $h_{n}(f(e_{0}))=e_{0}$, we get $%
f(e_{0})=h_{n}(e_{0})\in h_{n}(V_{n})=U_{n}$. But, $f(e_{0})\in f(B)\subset
B\subset V_{n}$. Thus, $f(e_{0})\in U_{n}\cap V_{n}$, a contradiction. This
completes Subcase~\ref{sub2}.

\begin{case}
$p=v$.
\end{case}

Here, we may assume that $f(p)\neq p$ and consider 2 subcases.

\begin{claim} $f(p)\in Y_{n}\setminus\{v,-a_{n}\}$ for some $%
n\in \mathbb{N}$.
\end{claim}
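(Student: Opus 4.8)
First a word on the logical status of the statement: it is the hypothesis of the first of the two subcases into which Case~3 (where $p=v$ and $f(v)\neq v$) splits, not a claim whose conclusion is the location of $f(v)$. Indeed, since $Y_m\cap Y_{m'}=\{v\}$ for $m\neq m'$ and $M_m\cap(-M_m)=\{v,-a_m\}$, every point of $X$ other than $v$ either equals some $-a_m$ or lies in a unique $Y_m\setminus\{v,-a_m\}$; so once $f(v)\neq v$ the two exhaustive alternatives for $f(v)$ are exactly ``$f(v)\in Y_n\setminus\{v,-a_n\}$ for some $n$'' (this subcase) and ``$f(v)=-a_m$ for some $m$'' (the companion subcase). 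Hence nothing is to be deduced about \emph{where} $f(v)$ sits; under the present location hypothesis the task inherited from the example is to exhibit a periodic point of $f$ of period at most two in $B=B(v,\varepsilon)\cap X$. My plan is to produce a \emph{fixed} point by transporting the analysis into the single rigid piece $M_n$ (or $-M_n$) and invoking Property~\ref{pro1}.

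By symmetry --- replacing $M_n$ by $-M_n$ and using the ``resp.'' form of Property~\ref{pro1} together with the rigidity of $-M_n$ --- I may assume $q:=f(v)\in M_n\setminus\{v,-a_n\}$. The first step is to confine a small piece of $M_n$ at $v$ so that rigidity becomes applicable. Since $M_n\setminus\{v,-a_n\}$ is open in $X$ and contains $q=f(v)$, the set $f^{-1}(M_n\setminus\{v,-a_n\})$ is an open neighborhood of $v$; and by the order-arc fact \cite[Theorem 14.6]{IN} there is a nondegenerate subcontinuum $A\subset M_n$ with $v\in A$ whose Hausdorff-small position at $v$ forces $f(A)\subset M_n\setminus\{v,-a_n\}\subset M_n$ (take $A=\alpha(t)$ of small diameter on an order arc from $\{v\}$ to $M_n$).

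The second step feeds $A$ into the rigidity of $M_n$: the map $f|_A\colon A\to M_n$ is a map between subcontinua of the rigid continuum $M_n$, hence it is either the natural embedding or a constant map. The natural embedding is impossible, since it would give $f(v)=v$, contrary to $f(v)=q\neq v$. Therefore $f|_A$ is constant, so $f(A)=\{q\}$ with $A$ nondegenerate and $q\neq v$; Property~\ref{pro1} then upgrades this to $f(M_n)=\{q\}$. As $q\in M_n$, this yields $f(q)=q$, so $q$ is a fixed point --- a periodic point of period one --- and $q=f(v)\in f(\overline B)\subset B$, placing it inside $B$ as required.

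The only delicate points I anticipate are the two containment steps that make rigidity usable: securing a nondegenerate subcontinuum of $M_n$ through $v$ whose $f$-image stays inside $M_n$ (handled by the openness of $M_n\setminus\{v,-a_n\}$ in $X$ and the shrinking order arc at $v$), and confirming that the excluded non-constant alternative is genuinely the natural embedding rather than some other identification --- which is precisely what the rigidity hypothesis forbids. The symmetric situation $q\in -M_n\setminus\{v,-a_n\}$ requires no new idea beyond interchanging the roles of $M_n$ and $-M_n$.
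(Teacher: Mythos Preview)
Your argument is correct and follows the same template as the paper's: use the openness of $M_n\setminus\{v,-a_n\}$ in $X$ to find a nondegenerate subcontinuum $A\subset M_n$ through $v$ with $f(A)\subset M_n$, apply rigidity to rule out the inclusion (since $f(v)\neq v=a_n$), and then invoke Property~\ref{pro1} to obtain $f(M_n)=\{q\}$. The paper goes a step further, repeating this with $A\subset M_m$ and $A\subset -M_m$ for \emph{every} $m\in\mathbb N$ to conclude that $f$ is constant on all of $X$; you instead stop at the single index $m=n$ and observe that $q\in M_n$ together with $f(M_n)=\{q\}$ already gives $f(q)=q$. Your shortcut is sound and more economical for the immediate goal of producing a fixed point in $B$, while the paper's version buys the extra structural information that $f$ is globally constant in this subcase.
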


In this subcase we will see that $f$ is a constant map. We may assume that $%
f(p)\in M_{n}$, the case  $f(p)\in -M_{n}$ being similar. Let $m\in \mathbb{%
N}$. Since $f^{-1}(M_{n}\setminus\{v,-a_{n}\})$ is an open subset of
$X$ containing $p$, there exists a nondegenerate subcontinuum $A$ of $M_{m}$
such that $p\in A\subset f^{-1}(M_{n}\setminus\{v,-a_{n}\})$. Then
$$
f(A)\subset M_{n}\setminus\{v,-a_{n}\}\quad\text{and}\quad f|_{A}:
M_{m}\supset A\rightarrow M_{n}$$
 By the rigidity of $M$, $f|_{A}$ is either a
constant map or the natural embedding. Since $f(a_{m})=f(p)\neq v=a_{n}$, $%
f|_{A}$ is not the natural embedding. Hence, $f(a)=f(p)$ for each $a\in A$.
By Property~\ref{pro1}, $f(M_{m})=\{f(p)\}$. By an analogous argument, we see that $%
f(-M_{m})=\{f(p)\}$. Therefore, $f(Y_{m})=\{f(p)\}$ for each $m\in \mathbb{N}
$, so $f$ is constant and $f(p)$ is a fixed point of $f$
in $B$.

\begin{claim}
$f(p)=-a_{n}$ for some $n\in \mathbb{N}$.
\end{claim}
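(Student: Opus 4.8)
The plan is to read this final statement as the last, eliminative case in the analysis of where $f(v)$ can land, and to deduce it from the structure of the wedge $X$ together with the disposal of the preceding subcase. Throughout we are in the situation $p=v$ with $f(p)\neq p$, so $f(v)\in X\setminus\{v\}$; note also that $f(v)\in B$ since $f(\overline{B})\subset B$ and $v\in B$.

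First I would record the decomposition of $X\setminus\{v\}$ forced by the construction. Because $T_n\cap T_m=\{v\}$ for $n\neq m$ and $a_n=v$, the summands $Y_n$ meet only at the wedge point, so $X\setminus\{v\}=\bigcup_n(Y_n\setminus\{v\})$ with the pieces $Y_n\setminus\{v\}$ pairwise disjoint; hence $f(v)$ lies in exactly one $Y_n\setminus\{v\}$. I would then refine this using the internal structure $Y_n=M_n\cup(-M_n)$: since $M$ meets the line $y=0$ precisely in the two points $a$ and $-a$, one gets $M_n\cap(-M_n)=\{a_n,-a_n\}=\{v,-a_n\}$, and therefore $Y_n\setminus\{v\}=(Y_n\setminus\{v,-a_n\})\cup\{-a_n\}$.

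Granting these two decompositions, the statement follows by elimination. If $f(v)\in Y_n\setminus\{v,-a_n\}$ then we are in the previous subcase, already settled (there Property~\ref{pro1} forces $f$ to be constant, so $f(v)$ is a fixed point of $f$ in $B$). Since that alternative is excluded in the case at hand and $f(v)\neq v$, the only remaining value is $f(v)=-a_n$ for the corresponding index $n$, which is exactly what is claimed.

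The argument is structural rather than computational, so the single point that genuinely needs justification is the intersection formula $M_n\cap(-M_n)=\{v,-a_n\}$; I expect this to be the main obstacle. It says that the two halves of each copy $Y_n$ touch only at the wedge point and at the antipodal point $-a_n$, and it rests on the normalization $\{a\}=M\cap([1,2]\times\{0\})$, $\{-a\}=M\cap([-2,-1]\times\{0\})$, which pins $M\cap(-M)$ to $\{a,-a\}$ in the plane model and hence gives the displayed intersection after transporting to $Y_n$. Once this is in place, the remaining steps are immediate from the disjointness of the wedge summands and from the prior subcase.
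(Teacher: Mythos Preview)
You have read the displayed line as a claim to be established by elimination, and for that purpose your argument is correct: from $f(v)\neq v$ and the decomposition $Y_n\setminus\{v\}=(Y_n\setminus\{v,-a_n\})\cup\{-a_n\}$, together with the disposal of the preceding subcase, one is indeed forced into $f(v)=-a_n$ for some $n$. Your justification of $M_n\cap(-M_n)=\{v,-a_n\}$ is also sound.

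However, this is not what the paper is doing at this point. In the paper the theorem environment named \emph{claim} is declared to print ``Subcase''; the displayed line ``$f(p)=-a_n$ for some $n\in\mathbb N$'' is the \emph{hypothesis} of the second subcase of Case~3, not a conclusion to be derived. The text that follows it in the paper is the actual work: assuming $f(v)=-a_n$, one must show that $f$ has a periodic point in $B$ of period at most two. The paper first observes $Y_n\subset B$ (from the normalization $d(v,-a_n)=\max\{d(v,x):x\in Y_n\}$), then takes an order arc $\alpha:[0,1]\to C(M_n)$ from $\{v\}$ to $M_n$. If some nondegenerate $A\ni v$ has $f(A)$ a singleton, Property~\ref{pro1} gives $f(M_n)=\{-a_n\}$ and $-a_n$ is a fixed point in $B$. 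Otherwise one sets $t_0=\max\{t:f(\alpha(t))\subset -M_n\}$, uses rigidity of $M$ to obtain $f=h_n$ on $\alpha(t_0)$, proves $v\in f(\alpha(t_0))$ via a separation argument, and concludes $f(f(v))=v$, so $v$ is periodic of period~$2$.

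Your proposal supplies only the (trivial) exhaustiveness of the case split and none of this analysis; it therefore does not address what the paper actually proves under this heading.
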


In this case $-a_{n}\in B$ and since $d(v,-a_{n})=\max \{d(v,x):x\in Y_{n}\}$, we have $Y_{n}\subset B$.
Let $\alpha :[0,1]\rightarrow C(M_{n})$ be a continuous
map such that $\alpha (0)=\{p\}$, $\alpha (1)=M_{n}$ and $\alpha
(s)\subsetneq \alpha (t)$ if $0\leq s<t\leq 1$. If $\alpha (t)$ contains a nondegenerate
subcontinuum $A$ such that $p\in A$ and $f(A)$ is a one-point set, then $f(M_{n})=\{-a_{n}\}$ by Property~\ref{pro1}. So, $f(-a_{n})=-a_{n}
$ is a fixed point for $f$ in $B$. Suppose then that for
each $t>0$ and, for each nondegenerate subcontinuum $A$ of $\alpha (t)$ containing $p$,
 $f(A)$ is nondegenerate. Let $t_{0}=\max \{t\in \lbrack 0,1]:f(\alpha
(t))\subset -M_{n}\}$. Then $f(\alpha (t_{0}))\subset -M_{n}$.

We claim that
\begin{equation}\label{eq2}
f(x)=h_{n}(x) \quad\text{for each}\quad x\in \alpha (t_{0}).
\end{equation}
If $t_{0}=0$, then $\alpha (t_{0})=\{p\}$ and $%
f(p)=-a_{n}=h_{n}(a_{n})=h_{n}(p)$. Now, if $t_{0}>0$, $\alpha (t_{0})$ is a
nondegenerate subcontinuum of $M_{n}$ such that $f|_{\alpha (t_{0})}: M_{n}\supset \alpha
(t_{0})\rightarrow -M_{n}$, so $h_{n}\circ f|_{\alpha
(t_{0})}:M_{n}\supset \alpha
(t_{0})\rightarrow M_{n}$. The rigidity of $%
M_{n}$ implies that $h_{n}\circ f|_{\alpha (t_{0})}$ is either a constant
map or the identity on $\alpha (t_{0})$. Since $t_{0}>0$, $f(\alpha (t_{0}))$
is nondegenerate, so $h_{n}(f(\alpha (t_{0})))$ is nondegenerate. Thus, $%
h_{n}\circ f|_{\alpha (t_{0})}$ is the identity on $\alpha (t_{0})$. Hence,
for each $x\in \alpha (t_{0})$, $h_{n}\circ f(x)=x$ and $f(x)=h_{n}(x)$.

Now, we will see that $p\in f(\alpha (t_{0}))$. Otherwise,
 \[
 -a_{n}=h(a_{n})=h_{n}(p)\notin
h_{n}\bigl(f(\alpha (t_{0}))\bigr)=\alpha (t_{0})\quad\text{and}\quad t_{0}<1.
\]
Thus,
\[\alpha
(t_{0})\cap f(\alpha (t_{0}))\subset (M_{n}\cap -M_{n})\setminus
\{p,-a_{n}\}=\emptyset.
\]
Let $W_{1}$ and $W_{2}$ be disjoint open subsets
of $X$ such that $\alpha (t_{0})\subset W_{1}$ and $f(\alpha (t_{0}))\subset
W_{2}\subset Y_{n}\setminus\{p\}$. Then there exists $t_{0}<t_{1}<1$
such that $\alpha (t_{1})\subset W_{1}$ and $f(\alpha (t_{1}))\subset W_{2}$%
. By the choice of $t_{0}$, there exists a point $y_{0}\in f(\alpha (t_{1}))\setminus -M_{n}\subset M_{n}\setminus -M_{n}$. Let $x_{0}\in
\alpha (t_{1})$ be such that $y_{0}=f(x_{0})$. Since $\alpha (t_{1})\setminus f^{-1}(-M_{n})$ is an open subset of the nondegenerate
continuum $\alpha (t_{1})$ and it contains $x_{0}$, there exists a
nondegenerate subcontinuum $A$ of $\alpha (t_{1})$ such that $x_{0}\in
A\subset \alpha (t_{1})\setminus f^{-1}(-M_{n})$. Then
\[
A\subset
\alpha (t_{1}), \quad f(A)\subset W_{2}\subset Y_{n}\setminus \{p\} \quad\text{and}\quad
f(A)\cap -M_{n}=\emptyset.
\]
 Thus, $f(A)\subset M_{n}$ and $f|_{A}:
M_{n}\supset A\rightarrow M_{n}$. By the rigidity of $M_{n}$, $f(a)=a$ for each $a\in
A$ or $f(A)$ is a singleton. In the latter case, by Property~\ref{pro1}, $f(M_{n})=\{-a_{n}\}$
and $-a_{n}$ is a fixed point for $f$ in $B$. Suppose now that $f(a)=a$
for each $a\in A$. In particular,
\[
x_{0}=f(x_{0})\in \alpha (t_{1})\cap
f(\alpha (t_{1}))\subset W_{1}\cap W_{2},
\]
 a contradiction. We have shown
that $p\in f(\alpha (t_{0}))$.

Let $x_{1}\in \alpha (t_{0})$ be such that $%
p=f(x_{1})=h_{n}(x_{1})$ (by~\eqref{eq2}). Then
$$
x_{1}=h_{n}(p)=-a_{n}\quad\text{and}\quad f(f(p))=f(-a_{n})=f(x_{1})=p.$$
 Therefore, $p$ is
a periodic point for $f$ in $B$ of order 2.

This completes the proof that for each continuous map $f:X\rightarrow X$ and
for each $B\in \mathcal{B}$ such that $f(\overline{B})\subset B$, $f$ has a
periodic point in $B$ of order at most 2.

In particular, we have shown that $X\in wLPPP\setminus wLFPP$.
\flushright $\square$
\end{example}

\bigskip

It is easy to check that  continuum $Y$ employed in Example~\ref{ex1} has not the fixed-point property but
any continuous self-map of $Y$ has a point of period at most 2. The example shows that  the infinite wedge of copies of $Y$ localize the properties.
Looking for a respective  locally connected example, we can recall that all connected compact polyhedra $X$ with trivial
odd-dimensional homology groups $H_{2i+1}(X,\mathbb{Q})$ have the periodic
point property [2, Proposition 4.4, p. 232]. Specifically, for any
fixed-point-free self-map of a $2$-sphere there is a point of period $2$.
Hence, we pose the following question.

\begin{question}
Is there a locally connected continuum $X\in LPPP\setminus LFPP$ ($X\in wLPPP\setminus wLFPP$)? Is the infinite wedge of $2$-spheres such  continuum?
\end{question}

\begin{question}
Are properties $LPPP$ and $LFPP$ ($wLPPP$ and $wLFPP$) equivalent for ANR%
-continua?
\end{question}

\begin{question}
Are properties $LPPP$ and $wLPPP$ ($LFPP$  and $wLFPP$) equivalent for ANR-continua?
\end{question}

\bigskip

\begin{example}\label{ex2}
\emph{There exists a continuum}
$$X \in wLFPP \setminus LPPP \subset wLFPP \setminus LFPP.$$
We consider the Cook's continuum $X$ constructed in~\cite[Theorem 8]{Co}, which is one-dimensional, nonplanar, hereditarily indecomposable and it is also rigid (it admits no non-constant, non-identity maps between subcontinua).

In order to see that $X \in wLFPP$, take any open basis $\mathcal B$ of nonempty sets.
Let $B \in \mathcal B$ and $f : X \to  X$ be a continuous map such that $f(\overline B) \subset B$.
By the rigidity of $X$, $f$ is either the identity map or it is a constant map. In the
first case, any point in $B$ is a fixed point for $f$. In the second case, there exists
$q\in X$ such that $f(x) = q$ for each $x \in X$. Notice that $q \in B$, so $q$ is a fixed
point for $f$ in $B$.

Now we check that $X \notin LPPP$. Take any open basis $\mathcal B$ for $X$. By~\cite{Maz}, there exists a Cantor set $C_0\subset X$ which contains at most one point of each composant of $X$. Fix a point $p_0 \in C_0$ and let $B \in\mathcal B$ be such that $p_0 \in B$
and $\overline  B \neq  X$. Since $B$ is open, there exists a Cantor set $C \subset B \cap C_0$. Then $C$
has also the property that it contains at most one point of each composant of $X$. Observe that there is a retraction $r :\overline B \to C$. Indeed,
if $D$ is the quotient space of the decomposition of $\overline B$ into components and $q:\overline B \to D$ is the quotient map, then $q|C: C\to q(C)$ is a homeomorphism and $q(C)$ is a non-empty closed subset of a zero-dimensional space $D$, so there is a retraction $f:D\to q(C)$~\cite[Problem 1.3.C, p. 22]{Eng}. Then put $r=q^{-1}\circ f\circ q$. Let  $\sigma:C\to C$ be a periodic-point-free map (any minimal map of $C$, say the adding machine on $C=\{0, 1\}^\infty$, can be used).  Then the composition $\sigma\circ r:\overline B \to \overline B$ is periodic-point-free.
\flushright $\square$
\end{example}

\begin{example}\label{ex3}
\emph{There exists a (non-planar) continuum}
$$Z \in wLPPP \setminus (LPPP\cup wLFPP).$$

Consider the  Cook's continuum $X$, that we used in Example~\ref{ex2}. Fix two different points $p, q \in X$.
Let $X'$ be a disjoint topological  copy of $X$, with corresponding points $p', q'$, and let $Y$  be a continuum obtained from $X\cup X'$ by identifying $p$ with $q'$ and $q$ with $p'$. Take an infinite  bouquet
$$Z=\bigcup\{Y_n: n\in\mathbb N\}$$
of copies $Y_n$ of $Y$, joined at the point $v=p_n$ for each $n$, where  $p_n\in Y_n$ corresponds to $p$.

We can mimic the argument from Example~\ref{ex1} to obtain
 $Z \in wLPPP\setminus wLFPP$.

In order to show that $Z \notin LPPP$, take an open basis $\mathcal B$ of $Z$. Let $B \in \mathcal B$ be such that
$$\emptyset\neq \overline B\subset Y_1\setminus \{v\}.$$
Now, the proof from Example~\ref{ex2} that $\overline B\notin PPP$ can be repeated  step-by-step.
\flushright $\square$
\end{example}

\begin{acknowledgement}
The results of this paper originated during the
7th Workshop on Continuum Theory and Hyperspaces in Quer\'{e}taro, M\'{e}%
xico, July 2013. The authors wish to thank the participants for useful
discussions, particularly, Roc\'{\i}o Leonel, Jorge M. Mart\'{\i}nez, Norberto
Ordo\~{n}ez and Jos\'{e} A. Rodr\'{\i}guez. This paper was partially
supported by the projects "Hiperespacios topol\'{o}gicos (0128584)" of
Consejo Nacional de Ciencia y Tecnolog\'{\i}a (CONACYT), 2009 and "Teor\'{\i}%
a de Continuos, Hiperespacios y Sistemas Din\'{a}micos" (IN104613) of
PAPIIT, DGAPA, UNAM.
\end{acknowledgement}

\bibliographystyle{amsplain}

\begin{thebibliography}{99}

\bibitem{Bo} K. Borsuk, \textit{Theory of Retracts}, PWN-Polish Scientific Publishers, Warsaw, 1967.

\bibitem{Co} H. Cook,  \textit{Continua which admit only the identity mapping onto non-degenerate subcontinua}, Fund. Math. \textbf{60} (1967), 241-–249.

\bibitem{Eng} R. Engelking, \textit{Theory of Dimensions, Finite and Infinite}, Heldermann Verlag, 1995.

\bibitem{GD} A. Granas and J. Dugundji, \textit{Fixed Point Theory}, Springer Verlag, New York, 2003.

\bibitem{IN} A. Illanes and S. B. Nadler, Jr., \textit{Hyperspaces: Fundamentals and
Recent Advances}, Monographs and Textbooks in Pure and Applied Math., Vol.
216, Marcel Dekker, Inc., New York, Basel, 1999.

\bibitem{KOU} P. Krupski, K. Omiljanowski and K. Ungeheuer, \textit{Chain recurrent sets of generic mappings on compact spaces}, arXiv:1312.7324
\bibitem{Ku} K. Kuratowski, \textit{Topology}, vol. II, Academic Press,
New York; PWN-Polish Scientific Publishers, Warsaw, 1968.

\bibitem{Ma} T. Ma\'{c}kowiak, \textit{Singular arc-like continua}, Dissertationes Math. (Rozprawy Mat.) \textbf{257} (1986).

\bibitem{Maz} S. Mazurkiewicz, \textit{Sur les continus ind\'{e}composables}, Fund. Math. \textbf{10} (1927), 305--310.

\bibitem{W} R. L. Wilder, \textit{Topology of Manifolds}, Amer. Math. Soc. Colloquium Publications,
vol. 32, New York, 1949.

\end{thebibliography}

\end{document}